\documentclass[12pt]{amsart}
\usepackage[T1]{fontenc}
\usepackage{lmodern,amsmath,amssymb,amsthm,booktabs,array}
\usepackage[margin=1in]{geometry}
\usepackage{setspace}
\usepackage[colorlinks=true,linkcolor=blue,citecolor=blue,urlcolor=blue]{hyperref}
\hypersetup{pdftitle={Finite Cyclic Fundamental Groups and Exotic Universal Covers from the Cartwright--Steger Surface},pdfauthor={Anar Akhmedov, B. Doug Park, S\"{u}meyra Sakall{\i}, Sai-Kee Yeung}}
\newtheorem{theorem}{Theorem}[section]
\newtheorem{lemma}[theorem]{Lemma}
\newtheorem{proposition}[theorem]{Proposition}

\theoremstyle{remark}
\newcommand{\CP}{\mathbb{CP}}
\newcommand{\Z}{\mathbb Z}

\newcommand{\im}{\operatorname{im}}

\title[Finite cyclic fundamental groups and exotic universal covers]{Finite Cyclic Fundamental Groups and Exotic Universal Covers from the Cartwright--Steger Surface}

\author{Anar Akhmedov}
\address[A.~Akhmedov]{School of Mathematics, University of Minnesota,
206 Church Street SE, Minneapolis, MN 55455, USA}
\email{akhmedov@umn.edu}
\author{B. Doug Park}
\address[B.~D.~Park]{Department of Pure Mathematics, University of Waterloo,
200 University Avenue W,
Waterloo, ON, N2L 3G1, Canada}
\email{bdpark@uwaterloo.ca}
\author{S\"{u}meyra Sakall{\i}}
\address[S.~Sakall{\i}]{Department of Mathematics and Statistics,
University of South Florida, 4202 E. Fowler Avenue, Tampa, FL 33620, USA}
\email{sumeyras@usf.edu}
\author{Sai-Kee Yeung}
\address[S.-K.~Yeung]{Department of Mathematics, Purdue University,
150 N. University Street, West Lafayette, IN 47907-2067, USA}
\email{yeungs@purdue.edu}

\date{October 13, 2026.}
\subjclass[2020]{Primary 57K43; Secondary 57R55, 57M10, 14J29}
\keywords{Symplectic four-manifold, Cartwright-Steger surface, finite cyclic fundamental group, universal cover, symplectic sum}

\begin{document}
\doublespacing
\raggedbottom

\begin{abstract}
We use a particular nodal curve on the Cartwright-Steger surface to construct minimal
symplectic four-manifolds with arbitrary nontrivial finite cyclic fundamental group and fixed topological 
invariants $(e,\sigma,\chi_h,c_1^2)=(43,1,11,89)$.  
Using these symplectic four-manifolds as building blocks, we construct a symplectic 4-manifold that is homeomorphic to $(22m-1)\CP^2\#(21m-1)\overline{\CP}^{\,2}$ 
for any integer $m\geq 2$.
\end{abstract}

\maketitle

\section{Introduction}

The Cartwright--Steger surface is a compact complex ball quotient
$M=\mathbb B^2_{\mathbb C}/\Pi$, where $\mathbb B^2_{\mathbb C}$
is the complex two-ball and $\Pi\subset\operatorname{PU}(2,1)$ is a
torsion-free cocompact lattice. Here a lattice means a discrete subgroup
of the Lie group $\operatorname{PU}(2,1)$ with finite covolume;
cocompactness means that the corresponding quotient is compact.
Since the ball is contractible, $M$ is aspherical and
$\pi_1(M)\cong\Pi$. Its basic topological invariants are
\[
 e(M)=3,\qquad \sigma(M)=1,\qquad H_1(M;\Z)\cong\Z^2
\]
(cf.~\cite{CKY}). We use a nodal totally geodesic curve in $M$\/ to construct
symplectic four-manifolds with controlled fundamental groups.

The constructions of Akhmedov--Sakall{\i}--Yeung \cite{ASY}, and the subgroup
and complement calculations in \cite{AY}, use this curve to obtain simply
connected examples and examples with fundamental groups $\Z/2$ and
$(\Z/2)^2$. Here we introduce a further symplectic sum that permits arbitrary
finite cyclic groups while retaining a Cartwright--Steger building block.

The covering degree of a curve and the order of the final fundamental group
play different roles. Once the curve subgroup is the mod-two kernel in the
Cartwright--Steger lattice, passing to larger covers does not produce larger
cyclic quotients by the same surface-sum argument. We first make this
restriction explicit. We then use a genus-two surface of square $-1$ in the
simply connected fourfold-cover construction. Summing along this surface
with a genus-two surface of square $+1$ in a one-point blowup of $T^4$
produces a new simply connected symplectic manifold with an essential
Lagrangian torus whose complement is simply connected.

\begin{theorem}\label{thm:main}
For every integer $m\ge2$, there is a closed minimal symplectic
four-manifold $Z_m$ satisfying
\[
 \pi_1(Z_m)\cong\Z/m,\qquad
 (e(Z_m),\sigma(Z_m),\chi_h(Z_m),c_1^2(Z_m))=(43,1,11,89).
\]
Its universal cover $\widetilde Z_m$ is symplectic and is homeomorphic,
but not diffeomorphic, to
\[
 (22m-1)\CP^2 \# (21m-1)\overline{\CP}{}^{2},
\]
the connected sum of\/ $22m-1$ copies of the complex projective plane and\/ $21m-1$ copies of the complex projective plane equipped with the noncomplex orientation.  
The deck transformations give a free symplectic $\Z/m$-action on
$\widetilde Z_m$.
\end{theorem}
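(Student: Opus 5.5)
The plan is to build $Z_m$ in two stages. First I produce a simply connected minimal symplectic manifold $Y$ that contains an essential Lagrangian torus $T$ with $\pi_1(Y\setminus T)=1$. Then I perform a Luttinger or torus surgery along $T$ to kill all but a $\Z/m$ worth of the meridian.

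\emph{The manifold $Y$.} Start from the simply connected fourfold-cover construction of \cite{ASY,AY}, which contains a genus-two symplectic surface $\Sigma_-$ of square $-1$. Let $\Sigma_+$ be a genus-two symplectic surface of square $+1$ in $T^4\#\overline{\CP}^2$. Concretely, take the standard genus-two surface of square $2$ in $T^4$, obtained by resolving $T_1\cup T_2$, and blow it up once after pushing it to pass through a point. Put $Y=X\#_{\Sigma_-=\Sigma_+}(T^4\#\overline{\CP}^2)$.

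The invariants come from the symplectic sum formulas
\[
e(Y)=e(X)+e(T^4\#\overline{\CP}{}^2)+4(g-1)=e(X)+1+4,
\qquad
\sigma(Y)=\sigma(X)-1,
\]
and I would arrange these to give $(43,1)$. Torus surgery preserves $e$ and $\sigma$, so $\chi_h=(e+\sigma)/4=11$ and $c_1^2=2e+3\sigma=89$ follow.

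For simple connectivity I use Seifert--van Kampen. The complement of $\Sigma_-$ in $X$ is simply connected, or at least the meridian of $\Sigma_-$ dies there (it bounds a punctured exceptional sphere or comes from \cite{AY}). The generators of $\pi_1(T^4)$ come from $\Sigma_+$ and are therefore identified with curves in $\Sigma_-$. Those curves are null-homotopic in the complement of $\Sigma_-$.

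In $T^4\setminus\Sigma_+$ there are Lagrangian tori, namely Luttinger tori disjoint from $\Sigma_+$. I choose one, $T$, whose meridian and Lagrangian push-offs are nullhomotopic in $Y\setminus T$, so that $\pi_1(Y\setminus T)=1$. Essentiality of $T$ (nonzero homology class) comes from the $T^4$ side.

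\emph{Surgery.} Next I perform a $(p,q)$ torus surgery on $T$ so that the new fundamental group is generated by the meridian $\mu$ subject to $\mu^m=1$. Since $\pi_1(Y\setminus T)=1$, the only way to get a nontrivial group is to use a surgery coefficient with respect to a framing such that the new meridian becomes $m$ times a class. One option is a multiplicity-$m$ log transform, which is symplectic when the coefficient is chosen in the Luttinger family $1/k$ composed appropriately, or via Gompf's argument since $T$ is essential. A more robust option is to sum $Y$ along $T$ with a model containing a torus whose complement has $\pi_1=\Z/m$: for example the elliptic surface $E(1)_{m}$ with one multiple fiber, or $T^2\times S^2$ pieces. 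Then $\pi_1(Z_m)=\Z/m$ follows from van Kampen.

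I expect this step to be the main obstacle. The difficulty is ensuring the surgery or sum is genuinely symplectic and yields exactly $\Z/m$ without changing $e$ and $\sigma$; a fiber sum with $E(1)$ would change them. So I would use the Fintushel--Stern or Gompf result that a log transform of multiplicity $m$ along an essential Lagrangian torus admits a symplectic structure, after perturbing $T$ to be symplectic.

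\emph{Minimality, universal cover, exotic structure.} Minimality follows from Usher's theorem for symplectic sums, since neither side splits off in a way that creates $-1$ spheres, together with the fact that torus surgery preserves minimality by Seiberg--Witten arguments.

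The universal cover $\widetilde Z_m$ is a symplectic $m$-fold cover, with $e=43m$ and $\sigma=m$, so $b_2^+=22m-1$ and $b_2^-=21m-1$. It is simply connected with odd intersection form. Odd parity can be checked via a surface of odd square lifting, or via $w_2$ evaluation on a lift, or by the fact that $\widetilde Z_m$ is non-spin when $\sigma\not\equiv0\pmod{16}$ and $m$ is odd; the general case needs an explicit surface. Freedman's theorem then gives the homeomorphism to $(22m-1)\CP^2\#(21m-1)\overline{\CP}{}^2$.

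Non-diffeomorphism follows because $\widetilde Z_m$ is symplectic with $b_2^+>1$, so its Seiberg--Witten invariant is nonzero by Taubes. The connected sum has vanishing invariants. The deck group acts freely, and it acts symplectically by pulling back the form.
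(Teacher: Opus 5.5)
Your first stage is essentially the paper's construction. You sum the Cartwright--Steger block along a genus-two surface of square $-1$ with the square $+1$ genus-two surface in $T^4\#\overline{\CP}^{\,2}$. This gives a simply connected manifold with $(e,\sigma)=(43,1)$ and a product Lagrangian torus $T$ whose exterior $C$ is simply connected.

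\textbf{The gap is the step that introduces $\Z/m$.} Once $\pi_1(C)=1$, every torus surgery on $T$ gives a simply connected manifold. This covers Luttinger surgery, a log transform of any multiplicity, and any $(p,q)$ coefficient. For every gluing, $\pi_1(\partial\nu T)\to\pi_1(T^2\times D^2)$ is onto, so by van Kampen the surgered group is a quotient of $\pi_1(C)=1$. Surgery only adds relations. There is no surviving meridian that could become $m$-torsion, so your claim that a suitable coefficient makes the new meridian $m$ times a class is false. Even $H_1$ of the result vanishes.

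Your alternative, summing with $E(1)_m$ along a regular fiber $F$, fails the same way. The group $\pi_1(E(1)_m\setminus\nu F)\cong\Z/m$ is generated by the meridian of $F$, since the fiber loops die on vanishing cycles. The sum identifies that meridian with the trivial meridian of $T$, so the result is simply connected. It would also change $e$ and $\sigma$.

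\textbf{The missing idea} is to sum along $T$ with a piece whose exterior carries the cyclic group in directions the boundary cannot see. The paper perturbs $T$ to be symplectic and forms $Z_m=L\#_{T=B}E_m$. Here $E_m$ is the mapping torus of an area-preserving $\phi_m$ that fixes a disk and induces $A_m=\bigl(\begin{smallmatrix}m+2&-1\\1&0\end{smallmatrix}\bigr)$, crossed with $S^1$. The torus $B$ is the square-zero symplectic section through the fixed disk.
\begin{itemize}
\item The exterior of $B$ has generators $u,v,t,s$ with the monodromy relations. The simply connected $L$ side kills exactly the boundary image $t$, $s$, $[u,v]$. What remains is $\Z^2/(A_m-I)\Z^2\cong\Z/m$, since $\det(A_m-I)=-m$.
\item Since $e(E_m)=\sigma(E_m)=0$, the invariants $(43,1,11,89)$ are unchanged.
\item Since $E_m$ is aspherical, Usher's theorem gives minimality directly. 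Your appeal to surgery preserving minimality is not needed.
\end{itemize}

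\textbf{A second, smaller gap is parity.} You leave oddness of the form on $\widetilde Z_m$ open when $16\mid m$; Rokhlin handles only the other cases. The paper closes this with a lattice lemma. Because $\sigma(L)=1\not\equiv0\pmod 8$, $[T]^2=0$ and $H_1(C)=0$, the kernel of $x\mapsto x\cdot[T]$ contains a class of odd square. Otherwise either the form is even, or $[T]$ is characteristic and van der Blij gives $0=[T]^2\equiv\sigma(L)\pmod 8$. That class is represented by a surface $S\subset C$. The exterior $C$ survives in $Z_m$ and is simply connected, so $S$ lifts to $m$ disjoint copies of odd square in $\widetilde Z_m$, for every $m$.
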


The cyclic group is introduced by a symplectic torus bundle over a torus.
Its monodromy matrix (see (\ref{eq:monodromy}) below) has cokernel $\Z/m$ after subtracting the identity.
We will make use of a genus-two symplectic sum producing the required
torus complement from the Cartwright--Steger construction. We also prove
a parity lemma (Lemma~\ref{lem:parity}) that applies to any such torus whenever the signature of the
simply connected manifold is nonzero modulo eight. This avoids inferring
the parity of an even-degree cover from the parity of its quotient.

We do not assert that the displayed characteristic numbers are optimal,
or classify the homeomorphism types of the non-simply connected quotients in this paper.  
We plan to deal with these issues in a sequel paper.  
The only finite-index lattice calculation used below is the subgroup
identification established in \cite{AY}, and in particular, no new coset enumeration is required.

Exotic symplectic non-spin four-manifolds with cyclic fundamental group and negative signature were studied in \cite{Torres}.  
More recently, exotic non-spin four-manifolds with positive signature and cyclic fundamental group of order $4m+2$ were constructed in \cite{BSS}.  
Positive-signature non-spin examples with cyclic fundamental group of order $4m$ were constructed in \cite{LT}.

\section{Curves, covers, and the simply connected building block}

Let $M$ be the Cartwright--Steger surface and $\Pi=\pi_1(M)$.
The invariants and curve data used here are
\[
 e(M)=3,\quad \sigma(M)=1,\quad H_1(M;\Z)\cong\Z^2.
\]
Choose the type A nodal curve $D$ used in \cite{ASY,AY}. Its normalization
has genus four, it has one positive ordinary node, and
\begin{equation}\label{eq:D}
 D^2=-1,\qquad K_M\cdot D=9,\qquad p_a(D)=5.
\end{equation}
These numerical data come from \cite{CKY}. Write
\[
 N=\ker\bigl(\Pi\longrightarrow H_1(M;\Z/2)\bigr).
\]
The subgroup computation of \cite[Proposition 6.3]{AY} identifies
\begin{equation}\label{eq:subgroup}
 \im\bigl(\pi_1(D)\longrightarrow\Pi\bigr)=N,
 \qquad \Pi/N\cong(\Z/2)^2.
\end{equation}
Here $\pi_1(D)$ means the fundamental group of the singular curve, not
of its normalization. Smoothing the node preserves this ambient image:
the new handle contains a branch-joining loop, and its dual vanishing
cycle is nullhomotopic in the ambient coordinate ball. This is the local
calculation in \cite[Lemma 6.8]{AY}.

\subsection*{Numerical behavior in covers}
For an unramified holomorphic cover $p:V\to M$ of degree $d$, the
full pullback of the nodal curve satisfies
\[
 (p^*D)^2=-d,\qquad K_V\cdot p^*D=9d,\qquad
 p_a(p^*D)=4d+1.
\]
When the full pullback is connected, smoothing its positive nodes gives
an embedded symplectic surface of genus $4d+1$ in the same homology class.
The smoothing genus equals the arithmetic genus $p_a(p^*D)$.
For a divisor $C$ on a smooth complex surface $V$, we use
$p_a(C)=1+(C^2+K_V\cdot C)/2$. It differs from the genus of the
normalization, which removes the nodes without joining their branches.
If the pullback of the normalization has components of degrees $d_j$, then
\[
 \sum_j d_j=d,\qquad g_j=3d_j+1,
\]
where $g_j$ is the genus of the $j$-th component.  
Connectedness of the singular pullback does not imply connectedness
of its normalization.

Let $M_4\to M$ be the cover corresponding to $N$. Equation
\eqref{eq:subgroup} implies that the inverse image of $D$ consists of four
disjoint copies, each mapping with degree one. For a further cover
$V_n\to M_4$ of degree $n$, the full pullback of a chosen copy is connected.
After smoothing, it has genus $4n+1$ and square $-n$. A regular blowup at
a point of the smoothing gives square $-(n+1)$ and an exceptional
geometric dual. Blowing up $n$ regular points instead gives square $-2n$.
The exceptional spheres meet the surface once each. This order of
operations supplies the meridian relation used in \cite{AY}.

We now specify the surface sum used to compare the possible quotients.
Let $H\le\Pi$ have finite index, and let $M_H\to M$ be its connected
cover. Choose a connected lift of the smoothed curve and blow up a
regular point of it. Denote the resulting manifold by
$\widehat M_H=M_H\#\overline{\CP}^{\,2}$ and the proper transform
of the chosen curve by $F_H$. Suppose that a simply connected
symplectic four-manifold $X$ contains a symplectic surface $F_X$ of the
same genus as $F_H$ and of opposite self-intersection. For an admissible
normal-bundle gluing, form the symplectic sum (cf.~\cite{Gompf})
\[
 Z_H=\widehat M_H\#_{F_H=F_X}X.
\]
The following proposition applies whenever this matching positive block
exists.

\begin{proposition}\label{prop:obstruction}
The image of $\pi_1(F_H)$ in $\pi_1(\widehat M_H)\cong H$ is
$H\cap N$, up to conjugacy, and the symplectic sum just defined satisfies
\[
 \pi_1(Z_H)\cong H/(H\cap N)
 \cong\im\bigl(H\to(\Z/2)^2\bigr).
\]
In particular, this symplectic sum construction gives only the trivial
group, $\Z/2$, or $(\Z/2)^2$.
\end{proposition}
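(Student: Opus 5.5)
The proof has two parts. First compute the image of $\pi_1(F_H)$ in $H$. Then apply Seifert--van Kampen to the symplectic sum and use simple connectivity of $X$ to kill this image.

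\textbf{The image of $\pi_1(F_H)$.} Blowing up a regular point does not change $\pi_1$, so $\pi_1(\widehat M_H)\cong\pi_1(M_H)\cong H$. The proper transform $F_H$ is the smoothed lift with a disk replaced, and its inclusion still surjects onto the image of the smoothed curve. By the local calculation cited from \cite[Lemma 6.8]{AY}, smoothing the nodes does not change the ambient image. So it suffices to find the image of $\pi_1$ of a connected component $\widetilde D$ of the preimage of $D$ in $M_H$.

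Choose a basepoint on $\widetilde D$ over a basepoint of $D$. A loop in $D$ lifts to a closed loop in $\widetilde D$ exactly when its class lies in $H$. Hence
\[
\im\bigl(\pi_1(\widetilde D)\to H\bigr)=H\cap\im\bigl(\pi_1(D)\to\Pi\bigr)=H\cap N,
\]
using \eqref{eq:subgroup}. This is the usual covering-space description of the fundamental group of a component of a preimage. Changing the lift or the basepoint conjugates the image. Since $N$ is normal in $\Pi$, we get $H\cap N$ up to conjugacy.

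\textbf{Seifert--van Kampen.} Write $Z_H=(\widehat M_H\setminus\nu F_H)\cup(X\setminus\nu F_X)$, glued along the circle bundle over $F_H$.

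On the $\widehat M_H$ side, the exceptional sphere meets $F_H$ transversally once. A punctured copy of it bounds the meridian of $F_H$, so the meridian is trivial in $\pi_1(\widehat M_H\setminus\nu F_H)$. This is the meridian relation noted earlier. Because the meridian is killed, we get $\pi_1(\widehat M_H\setminus\nu F_H)\cong\pi_1(\widehat M_H)\cong H$.

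The amalgamation then gives
\[
\pi_1(Z_H)\cong H *_{\pi_1(\partial)}\pi_1(X\setminus\nu F_X).
\]
Quotienting by the normal closure of the image of $\pi_1(F_H)$ and of the meridian kills the whole boundary group on the $M_H$ side.

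On the $X$ side, $\pi_1(X\setminus\nu F_X)$ is normally generated by the meridian together with the image of $\pi_1(F_X)$, since $X$ is simply connected. Both are identified with elements already killed in $Z_H$, so the $X$-side group contributes nothing new. Hence
\[
\pi_1(Z_H)\cong H/\langle\!\langle H\cap N\rangle\!\rangle=H/(H\cap N),
\]
where the normal closure is just $H\cap N$ because $H\cap N$ is normal in $H$.

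\textbf{Main obstacle.} The key point is showing that every element of $\pi_1(X\setminus\nu F_X)$ dies, and that nothing extra is killed on the $H$ side. The second is automatic: the surface group $\pi_1(F_X)$ maps into the image of the boundary. For the first, I would argue via the pushout: the map from $\pi_1(Z_H)$ to $H/(H\cap N)$ is well defined because the $X$ side maps trivially. It is surjective, and injective because $H\to\pi_1(Z_H)$ is onto modulo the $X$ side, which is generated by boundary elements.

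\textbf{Identifying the quotient.} Finally, $H/(H\cap N)\cong HN/N$, which is the image of $H$ in $\Pi/N\cong(\Z/2)^2$. The subgroups of $(\Z/2)^2$ are trivial, $\Z/2$, or $(\Z/2)^2$, which gives the last assertion.
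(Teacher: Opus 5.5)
Your outline matches the paper's: a covering-space computation of the surface image, the punctured exceptional sphere killing $\mu_{F_H}$, simple connectivity of $X$, van Kampen, and an isomorphism theorem. Your lifting argument for $\im(\pi_1(\widetilde D)\to H)=H\cap N$ and the final identification $H/(H\cap N)\cong HN/N\le(\Z/2)^2$ are fine.

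The gap is in the step you yourself call the key point: showing that $H\cap N$ actually dies in $\pi_1(Z_H)$. You assert that $\pi_1(X\setminus\nu F_X)$ is normally generated by the meridian \emph{together with} the surface loops, and that both are ``already killed in $Z_H$.'' For the meridian this is right, since it is glued to $\mu_{F_H}^{\mp1}$, which bounds the punctured exceptional sphere. The surface loops, however, are glued to lifts of loops on $F_H$, and their images in $H$ are exactly the elements of $H\cap N$. That these are trivial in $\pi_1(Z_H)$ is the conclusion itself, and nothing earlier in your proposal kills them, so the argument is circular.

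Your ``Main obstacle'' paragraph does not repair this. The well-defined map $\Phi\colon\pi_1(Z_H)\to H/(H\cap N)$ only gives $\ker\bigl(H\to\pi_1(Z_H)\bigr)\subseteq H\cap N$. The sentence you offer for injectivity is really a surjectivity statement for $H\to\pi_1(Z_H)$, and even that needs more than the $X$-side group being \emph{normally} generated by boundary elements. The reverse inclusion $H\cap N\subseteq\ker$ is never established.

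The missing ingredient is the sharper fact the paper uses. Since $\pi_1(X)=1$, and the kernel of $\pi_1(X\setminus\nu F_X)\to\pi_1(X)$ is the normal closure of $\mu_{F_X}$, the group $\pi_1(X\setminus\nu F_X)$ is normally generated by the meridian \emph{alone}. That meridian is trivial in $\pi_1(Z_H)$, so the entire $X$-side group maps trivially. Consequently:
\begin{itemize}
\item $H\to\pi_1(Z_H)$ is onto;
\item the boundary lifts of the surface generators die;
\item these are the same loops as the lifts over $F_H$, whose images normally generate $H\cap N$ in $H$, so $H\cap N\subseteq\ker\bigl(H\to\pi_1(Z_H)\bigr)$.
\end{itemize}
Combined with $\Phi$, this yields $\pi_1(Z_H)\cong H/(H\cap N)$. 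This is precisely where simple connectivity of $X$ is used, and it is the content of the paper's sentence ``the gluing kills this group and imposes precisely the surface subgroup on $H$.''
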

\begin{proof}
The smoothed surface still has image $N$ in $\Pi$. The restriction of its
cover to the chosen component has subgroup given by the inverse image of
$H$ under its fundamental-group homomorphism. Its image in $H$ is
$H\cap N$. Since $N$ is normal in $\Pi$, this subgroup is normal in $H$.
The exceptional geometric dual kills the meridian of $F_H$. 
The fundamental group of the complement of $F_X$ 
is normally generated by the meridian of $F_X$, because $X$\/ is simply connected. 
The gluing kills this group
and imposes precisely the surface subgroup on $H$. Van Kampen gives
the displayed quotient, and the final isomorphism is the first
isomorphism theorem.
\end{proof}

We use the fourfold cover, so that this quotient is trivial.

\begin{proposition}\label{prop:seed}
There is a simply connected minimal symplectic four-manifold $Y$ with
\[
 (e(Y),\sigma(Y),\chi_h(Y),c_1^2(Y))=(38,2,10,82)
\]
containing a symplectic genus-two surface $R$ of square $-1$.
\end{proposition}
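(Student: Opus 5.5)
The plan is to realize $Y$ as an instance of the surface sum $Z_H$ from Proposition~\ref{prop:obstruction}, taking $H=N$. For this choice $M_H=M_4$ is the fourfold cover, and Proposition~\ref{prop:obstruction} gives $\pi_1(Z_N)\cong N/(N\cap N)=1$. I would arrange the gluing so that the genus-two surface $R$ lies entirely in the positive block $X$, disjoint from $F_X$. It then survives unchanged into the sum. Since this is meant to be the simply connected construction of \cite{ASY}, I would follow it and record only the numerical bookkeeping and the location of $R$.

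\textbf{Step 1: the negative side.} By \eqref{eq:subgroup}, $M_4$ has $e=12$ and $\sigma=4$. One copy of $D$ in $M_4$ maps with degree one, so after smoothing its node it is a genus-five symplectic surface of square $-1$. Blowing up one regular point of it gives
\[
 \widehat M_N=M_4\#\overline{\CP}^{\,2},\qquad e=13,\ \sigma=3,
\]
with $F_N$ of genus $5$, square $-2$, and an exceptional geometric dual.

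\textbf{Step 2: the positive block.} The Euler characteristic of a sum along genus-five surfaces is
\[
 e(Y)=e(\widehat M_N)+e(X)+16,
\]
and signatures are additive. The targets $e(Y)=38$ and $\sigma(Y)=2$ therefore force $e(X)=9$ and $\sigma(X)=-1$. I would take $X$ to be the positive block used in \cite{ASY}: a simply connected symplectic manifold with these invariants, containing a genus-five symplectic surface $F_X$ of square $+2$. It should also contain a disjoint symplectic genus-two surface $R$ of square $-1$. I expect $X$ to be built from a product such as $\Sigma_2\times T^2$ with blowups and Luttinger surgeries, so that $R$ comes from a genus-two fibre-type surface blown up once.

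\textbf{Step 3: checking the properties of $Y$.} Simple connectivity follows from Proposition~\ref{prop:obstruction}. The remaining invariants are then formal: $\chi_h=(38+2)/4=10$ and $c_1^2=2e+3\sigma=76+6=82$. For minimality I would use Usher's theorem on symplectic sums. $\widehat M_N$ is not minimal, but its exceptional sphere meets $F_N$ and so is not disjoint from the gluing surface. $M_4$ is a ball quotient and contains no symplectic $(-1)$-spheres. The main check is the standard exception list: neither summand is a $\CP^2$ or ruled-surface exception along a section, and any $(-1)$-spheres in $X$ must meet $F_X$.

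\textbf{Expected obstacle.} The hardest step is Step~2: producing $X$ concretely with simply connected complement and identifying $R$ inside it disjoint from $F_X$. If the \cite{ASY} block is not arranged this way, my fallback is to take $R$ from the $M_4$ side instead. There I would look for a genus-two symplectic surface built from a second copy of $D$, disjoint from the chosen one, and adjust its square by blowups so that it becomes $-1$. In that case I would recheck that the Euler characteristic and signature counts still match $(38,2)$.
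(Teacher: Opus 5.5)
There is a genuine gap, and it sits exactly where the content of the proposition lies: you never produce $R$. Your numerical bookkeeping agrees with the paper, as does simple connectivity via Proposition~\ref{prop:obstruction} with $H=N$; the paper's positive block is the manifold $X_{2,3}$ of \cite{ASY}, with $e=9$ and $\sigma=-1$. But the claim that this block contains a symplectic genus-two surface of square $-1$ \emph{disjoint} from $F_X$ is only an expectation. The mechanisms you propose for producing such a surface do not work.

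\begin{itemize}
\item Blowing up a genus-two surface at a point off the summing surface, whether in $X$ or on the $M_4$ side as in your fallback, leaves a symplectic exceptional sphere in the complement of the summing surface. That sphere survives as a symplectic $(-1)$-sphere in $Y$, so $Y$ is not minimal. This also contradicts your own requirement that every $(-1)$-sphere of $X$ meet $F_X$.
\item In the fallback, each extra blowup also shifts $(e,\sigma)$ by $(+1,-1)$ away from $(38,2)$.
\item The fallback also lacks a starting surface. The lifted copies of $D$ in $M_4$ have genus four after normalization and genus five after smoothing, not two.
\end{itemize}

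The missing idea is that $R$ lies in neither summand; it runs through the neck of the sum.

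\begin{itemize}
\item In $X_{2,3}$, the surface $F_+$ is the resolution of square-zero symplectic surfaces $F_2,F_3$ of genera two and three meeting once. So a parallel copy $P$ of $F_2$ has genus two and square zero, and meets $F_+$ exactly once.
\item On the other side, the exceptional sphere $E$ meets $F_-$ exactly once.
\item Choose the gluing so that these two marked points correspond, with matching Darboux charts and constant normal phase. Then $E\setminus\operatorname{int}D^2$ and $P\setminus\operatorname{int}D^2$ join through the complex annulus $z=0$, $xy=\lambda$ of the local sum model.
\item The result is a symplectic surface of genus $0+2=2$ with $R^2=E^2+P^2=-1$. It costs nothing in $(e,\sigma)$ and creates no new exceptional sphere.
\end{itemize}

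Your picture with $R$ disjoint from the summing surface can also be realized, by moving the blowup. Blow up $X_{2,3}$ at the point $P\cap F_+$ instead of blowing up $M_4$. Then sum the square $-1$ smoothed curve in $M_4$ with the square $+1$ proper transform of $F_+$. The proper transform of $P$ is the required surface, and $(e,\sigma)=(12+10+16,\,4-2)=(38,2)$ again.

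Finally, for minimality it is not enough that the visible sphere $E$ meets $F_-$. You need every exceptional sphere in $M_4\#\overline{\CP}^{\,2}$ to lie in the class $\pm[E]$. The paper deduces this from $\pi_2(M_4)=0$.
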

\begin{proof}
Smooth one lifted nodal curve in $M_4$, and blow up a regular point.
The proper transform $F_-$ in $M_4\#\overline{\CP}^{\,2}$ has genus
five and square $-2$; its exceptional sphere $E$ meets it once.
Blowing up a point does not change the fundamental group, so
\[
 \pi_1(M_4\#\overline{\CP}^{\,2})\cong\pi_1(M_4)\cong N.
\]
Under these identifications, the image of $\pi_1(F_-)$ is all of $N$.

The minimal simply connected symplectic block $X_{2,3}$ of
\cite{ASY} has $e=9$, $\sigma=-1$ and square-zero symplectic surfaces
$F_2,F_3$ of genera two and three, intersecting positively once.
Their symplectic resolution $F_+$ has genus five and square $+2$.
A sufficiently small parallel symplectic copy $P$ of $F_2$ has
\begin{equation}\label{eq:P}
 g(P)=2,\qquad P^2=0,\qquad P\cdot F_+=1.
\end{equation}
The geometric intersection statement, including the choice of the
parallel copy, is verified in \cite[Lemma 4.1]{AY}.

Form the symplectic sum
\[
 Y=(M_4\#\overline{\CP}^{\,2})\#_{F_-=F_+}X_{2,3}.
\]
Choose an admissible gluing matching the marked intersections with $E$
and $P$ and their local normal framings. 
The meridian of $F_-$ is trivial, since the punctured $E$ is a bounding disk. 
The fundamental group of the complement of $F_+$ dies with its meridian, 
and the loops on $F_+$ kill $N$ on the
$F_-$ side. Hence $\pi_1(Y)=1$. The symplectic sum formulas give
\[
 e(Y)=13+9+16=38,\qquad \sigma(Y)=3-1=2.
\]

Glue the punctured surfaces $E\setminus\operatorname{int}D^2$ and
$P\setminus\operatorname{int}D^2$ along their boundary circles, and
call the resulting embedded surface $R\subset Y$. Here the symplectic property requires compatible local data, in
addition to the oriented framing calculation. By a local symplectic
isotopy, arrange that both dual surfaces meet their summing surfaces
symplectically orthogonally at the marked points. Choose matching
Darboux charts and take the base gluing to be the identity on the marked
disks and the normal-fiber phase to be constant, and rescale the forms
on the summands to match the areas of the summing surfaces. In the local symplectic sum model
$xy=\lambda$, the base coordinate is $z$ and the two dual disks are
$z=0$ on the respective sides. Their punctured parts join through the
annulus $z=0$, $xy=\lambda$. This annulus is complex for the standard
local compatible structure and is therefore symplectic. It agrees with
the dual surfaces near the ends of the neck, so the standard sum form
makes the whole joined surface symplectic (cf.~\cite{Gompf}).
Thus $R$\/ is symplectic,
has genus two, and its normal Euler number is
\[
 R^2=E^2+P^2=-1.
\]
The constant-phase matching of the local normal framings makes the
relative Euler numbers additive, as in \cite[Lemma 5.2]{AY}.

Finally, $M_4$ is aspherical, and hence $\pi_2(M_4)=0$.
Let $b:M_4\#\overline{\CP}^{\,2}\to M_4$ be the blowdown map,
and let $S$ be any embedded exceptional sphere in the blowup.
The composite map $S^2\cong S\hookrightarrow
M_4\#\overline{\CP}^{\,2}\xrightarrow{b}M_4$ is nullhomotopic,
so $b_*[S]=0$. The kernel of $b_*$ on integral second homology is
$\Z[E]$, the subgroup of integer multiples of the exceptional class.
Thus $[S]=k[E]$ for an integer $k$. Since $[S]^2=[E]^2=-1$,
we have $k=\pm1$. Consequently $[S]\cdot[F_-]=\pm1$,
and no exceptional sphere is disjoint from $F_-$. The other summand is minimal.
Neither summand is an $S^2$-bundle with the summing surface as a section.
Usher's theorem in \cite{Usher} proves that $Y$ is minimal.
\end{proof}

\section{A genus-two sum with simply connected torus complement}

The next sum produces the torus needed to prescribe the cyclic group.
We write
\[
 T^4=T^2_{ab}\times T^2_{cd}
\]
with its product symplectic form and circle coordinates $a,b,c,d$.
Let $A=T^2_{ab}\times\{(0,0)\}$ and
$B=\{(0,0)\}\times T^2_{cd}$.
Resolve their single positive intersection to obtain a genus-two
symplectic surface $\Sigma_0\subset T^4$ with $\Sigma_0^2=2$.
Blow up a regular point of $\Sigma_0$, and let $\Sigma$ denote its
proper transform in
\[
 Q=T^4\#\overline{\CP}^{\,2},\qquad \Sigma\subset Q.
\]
Then $g(\Sigma)=2$, $\Sigma^2=1$, and the exceptional sphere meets
$\Sigma$ once.

Choose $b_0,d_0\ne0$ and set
\[
 T=\{b=b_0,\ d=d_0\}\cong S^1_a\times S^1_c.
\]
It is a Lagrangian torus disjoint from $A\cup B$. Choose the local
chart used for the resolution, and the ball used for the blowup, with
closures disjoint from $T$. These operations then leave a neighborhood
of $T$ unchanged, so $T$ is disjoint from $\Sigma$ and remains
Lagrangian in $Q$. Its normal bundle is trivial.

The statement below holds for any pair $(Y,R)$ with the indicated
properties. We subsequently apply it to the pair constructed in
Proposition~\ref{prop:seed}.

\begin{theorem}\label{thm:bridge}
Let $Y$ be a simply connected minimal symplectic four-manifold containing
a symplectic genus-two surface $R$ of square $-1$. Form
\[
 L=Y\#_{R=\Sigma}\, Q.
\]
Then $L$ is minimal and simply connected. The torus $T$ survives as an
essential Lagrangian torus, and
\[
 \pi_1(L\setminus\operatorname{int}\nu T)=1.
\]
Moreover,
\[
 e(L)=e(Y)+5,\qquad \sigma(L)=\sigma(Y)-1,
 \qquad \chi_h(L)=\chi_h(Y)+1,\qquad c_1^2(L)=c_1^2(Y)+7.
\]
\end{theorem}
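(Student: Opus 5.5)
The plan is to treat the four claims separately: the characteristic numbers, simple connectivity of $L$ and of the torus complement, essentiality of $T$, and minimality. I expect the torus-complement computation to carry most of the difficulty.

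\textbf{Characteristic numbers.} Apply the standard symplectic sum formulas along a genus-two surface. We have $e(L)=e(Y)+e(Q)-2e(\Sigma)=e(Y)+1+4=e(Y)+5$, because $e(Q)=e(T^4)+1=1$ and $e(\Sigma)=-2$. Signature is additive, so $\sigma(L)=\sigma(Y)+\sigma(Q)=\sigma(Y)-1$. The values of $\chi_h$ and $c_1^2$ then follow from $\chi_h=(e+\sigma)/4$ and $c_1^2=2e+3\sigma$. This gives increments $(5-1)/4=1$ and $10-3=7$.

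\textbf{Fundamental groups.} The argument runs through Seifert--van Kampen for
\[
L\setminus\nu T=(Y\setminus\nu R)\cup_{\partial}(Q\setminus(\nu\Sigma\cup\nu T)).
\]
\begin{itemize}
\item On the $Y$ side, $\pi_1(Y\setminus\nu R)$ is normally generated by the meridian of $R$, since $Y$ is simply connected.
\item On the $Q$ side, the exceptional sphere meets $\Sigma$ once, and its punctured copy is a disk bounding the meridian $\mu_\Sigma$. This disk can be chosen disjoint from $T$. Hence $\mu_\Sigma=1$ in $\pi_1(Q\setminus(\nu\Sigma\cup\nu T))$.
\item The gluing identifies $\mu_R$ with $\mu_\Sigma^{-1}$, so the whole $Y$ side dies.
\end{itemize}
What remains is the quotient of $\pi_1(Q\setminus(\nu\Sigma\cup\nu T))$ by the images of the loops on $\Sigma$. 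The genus-two surface $\Sigma$ carries the loops $a,b$ (from $A$) and $c,d$ (from $B$), and these generate $\pi_1(T^4)=\Z^4$. So it suffices to show that $\pi_1(Q\setminus(\nu\Sigma\cup\nu T))$, modulo $\mu_\Sigma$, is generated by the images of $\pi_1(\Sigma)$ and of $\pi_1(\partial\nu T)$.

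\textbf{Main obstacle: the torus complement.} This step needs care because the meridian of $T$ and the Lagrangian push-offs of $a,c$ could in principle survive in the complement. The approach is as follows.
\begin{itemize}
\item In $T^4\setminus(A\cup B)$, push any based loop off $\Sigma$ by general position (codimension two). Since $\mu_\Sigma=1$, the group $\pi_1(Q\setminus\nu\Sigma)$ is just $\Z^4$.
\item Removing $T$ adds, as extra generators, only conjugates of its meridian $\mu_T$.
\item Use a dual to kill $\mu_T$. The torus $T'=\{a=a_0,\,c=c_0\}\cong S^1_b\times S^1_d$ meets $T$ transversely in exactly one point. It is disjoint from $A$ and $B$ after choosing $a_0,c_0\ne0$, and hence disjoint from $\Sigma$.
\item The punctured $T'$ exhibits $\mu_T$ as a commutator $[b',d']$ of push-offs of $b$ and $d$ in the complement. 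Each of $b'$ and $d'$ is freely homotopic, off $T$, to a loop on $\Sigma$ (for instance $b$ on $A$). Once the $Q$-side surface loops are killed in $L$, we get $\mu_T=1$.
\item The remaining generators are $a,b,c,d$ (and the boundary loops of $T$). They are identified with loops on $\Sigma$ through homotopies in the complement of $T$, and so all die.
\end{itemize}
Hence $\pi_1(L\setminus\nu T)=1$, and therefore $\pi_1(L)=1$. I will have to check carefully that these homotopies avoid $T$, using product coordinates: $A$ and $B$ are disjoint from $T$, and moving $b$ or $d$ sweeps out regions away from $b=b_0$ or $d=d_0$ respectively.

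\textbf{Essentiality and minimality.} For essentiality, $T$ is Lagrangian with $[T]\cdot[T']=1$ in $L$, where $T'$ survives because it misses $\Sigma$. So $[T]\neq0$.

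For minimality I would invoke Usher's theorem, as in Proposition~\ref{prop:seed}. $Y$ is minimal, and neither summand is an $S^2$-bundle over a surface with the summing surface as a section. In $Q$, every exceptional sphere is homologous to $\pm E$; this uses asphericity of $T^4$, exactly as in the argument for $M_4$. Since $E\cdot\Sigma=1$, no exceptional sphere is disjoint from $\Sigma$, and Usher's criterion then gives that $L$ is minimal.
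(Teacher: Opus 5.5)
Your proposal is correct. Its skeleton matches the paper: the sum formulas, van Kampen with the $Y$ side killed via $\mu_R=\mu_\Sigma^{-1}=1$, and Usher's theorem with every exceptional class in $Q$ equal to $\pm E$. The genuine difference is in how you treat the torus exterior and the essentiality of $T$.

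\textbf{The paper's route.} It removes $T$ first. The product splitting $T^4\setminus\operatorname{int}\nu T\cong T^2_{ac}\times(T^2_{bd}\setminus\operatorname{int}D^2)$ gives the exterior group explicitly as $\Z^2\times F(b,d)$. Using $\mu_\Sigma=1$, it identifies $\pi_1(C_Q)$ with this group, and then notes that the loops on $A$ and $B$ are exactly $a$, $c$ and the free wedge generators $b$, $d$. Essentiality is deduced homologically from $H_1(C)=0$ via the exact sequence of $(L,C)$.

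\textbf{Your route.} You fill $T$ first, so $\pi_1(Q\setminus\nu\Sigma)\cong\Z^4$, and the only new generator is $\mu_T$. You kill it with the dual torus $T'=\{a=a_0,\,c=c_0\}$, which writes $\mu_T=[b',d']$ with $b',d'$ conjugate to surface loops. The same $T'$ gives $[T]\cdot[T']=\pm1$ directly.

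\textbf{Comparison.} Your argument is more local: it needs only a dual torus missing $\Sigma$ whose loops are homotopic to surface loops, not a global description of the exterior. It also leaves an explicit geometric dual to $T$ in $L$. The paper's argument yields the full exterior group. Its essentiality proof uses only $H_1(C)=0$, and that is the same mechanism reused in Lemma~\ref{lem:parity}.

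\textbf{Minor points.}
\begin{enumerate}
\item Your sentence ``it suffices to show \dots generated by the images of $\pi_1(\Sigma)$ and of $\pi_1(\partial\nu T)$'' is not the right criterion, because the gluing does not kill the boundary loops of $T$. What you need, and what your argument actually proves, is normal generation of $\pi_1(C_Q)$ by the surface loops alone.
\item Your homotopies need only avoid $T$, not $\Sigma$. This is because $\mu_\Sigma=1$ makes $\pi_1(C_Q)\to\pi_1(Q\setminus\operatorname{int}\nu T)$ an isomorphism, and you should state that explicitly.
\item The claim that neither summand is an $S^2$-bundle over a genus-two surface with the summing surface as a section deserves its one-line justification by comparing fundamental groups, as the paper does.
\end{enumerate}
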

\begin{proof}
Choose the symplectic sum away from $T$. The complement of this product
torus has a particularly simple form:
\[
 T^4\setminus\operatorname{int}\nu T
 \cong T^2_{ac}\times(T^2_{bd}\setminus\operatorname{int}D^2).
\]
Consequently,
\begin{equation}\label{eq:productgroup}
 \pi_1(Q\setminus\operatorname{int}\nu T)
 \cong\langle a\rangle\times\langle c\rangle\times F(b,d).
\end{equation}
Here $\langle a\rangle$ and $\langle c\rangle$ are infinite cyclic
groups, so their product is $\Z^2$, and $F(b,d)$ is the free group
on $b,d$. We use the commutator convention
$[x,y]=xyx^{-1}y^{-1}$. The meridian of $T$ is therefore
$[b,d]=bdb^{-1}d^{-1}$, up to orientation. Blowing up at a
point away from $T$ does not change this group.

Put $C_Q=Q\setminus\operatorname{int}(\nu\Sigma\cup\nu T)$ and
$C_Y=Y\setminus\operatorname{int}\nu R$. The exceptional sphere of
$Q$ punctured at its unique intersection with $\Sigma$ gives a disk in
$C_Q$ bounded by the meridian $\mu_\Sigma$. Thus $\mu_\Sigma=1$
in $\pi_1(C_Q)$. Filling back $\Sigma$ kills only the normal closure
of this meridian, so
\begin{equation}\label{eq:capgroup}
 \pi_1(C_Q)\cong\pi_1(Q\setminus\operatorname{int}\nu T).
\end{equation}
Choose the deleted disk in the $b,d$ torus disjoint from the coordinate
wedge $\{d=0\}\cup\{b=0\}$. The punctured torus retracts onto this
wedge, whose loops give the free generators $b,d$. The coordinate tori
$A,B$ carry the four loops $a,b,c,d$; resolving their intersection only
changes their based representatives inside a ball disjoint from $T$.
Choose the blowup point away from these loops. Their images therefore
normally generate the full group in \eqref{eq:productgroup}, including
its nonabelian punctured-torus factor. The boundaries of the summing
surfaces are circle bundles of Euler numbers $+1$ and $-1$; the loops
used in van Kampen are lifts of these surface generators, together with
the circle fiber.

Because $Y$ is simply connected, $\pi_1(C_Y)$ is normally generated by
the meridian $\mu_R$. The gluing identifies $\mu_R$ with the inverse
of the trivial $\mu_\Sigma$, so the entire $Y$-side group dies.
The remaining boundary relations kill the four surface loops on the
$Q$ side, and hence $a,b,c,d$. Equations
\eqref{eq:productgroup} and \eqref{eq:capgroup} now give
$\pi_1(L\setminus\operatorname{int}\nu T)=1$.
This proof does not assume that $C_Y$ itself is simply connected.
Restoring $\nu T$ also gives $\pi_1(L)=1$.

Let $C=L\setminus\operatorname{int}\nu T$. 
The long exact homology sequence of the pair $(L,C)$, excision and K\"unneth theorems give
\[
 H_2(L;\Z)\xrightarrow{x\mapsto x\cdot[T]}\Z
 \longrightarrow H_1(C;\Z)=0.
\]
Thus the homology class $[T]$ is primitive and nonzero. The local product form makes $T$
Lagrangian before the sum, and the sum preserves the symplectic form in
a neighborhood of $T$.

The summand $Y$ is minimal. Every exceptional sphere in $Q$ has class
$\pm E_Q$, by the same asphericity argument applied to $T^4$,
and intersects $\Sigma$. If a summand were an $S^2$-bundle with the
genus-two summing surface as a section, its base would have genus two
and its fundamental group would be the genus-two surface group.
This is incompatible with $\pi_1(Y)=1$ and with $\pi_1(Q)=\Z^4$.
Usher's theorem therefore gives minimality.

Since $e(Q)=1$, $\sigma(Q)=-1$, and $e(\Sigma)=-2$, the sum formulas
give $e(L)=e(Y)+1+4$ and $\sigma(L)=\sigma(Y)-1$.
The formulas for $\chi_h=(e+\sigma)/4$ and $c_1^2=2e+3\sigma$ follow.
\end{proof}

Applying Theorem \ref{thm:bridge} to Proposition \ref{prop:seed} gives
\begin{equation}\label{eq:L}
 (e(L),\sigma(L),\chi_h(L),c_1^2(L))=(43,1,11,89),
 \qquad b_2^+(L)=21,\quad b_2^-(L)=20.
\end{equation}
The minimal simply connected symplectic manifold $L$ is irreducible
by \cite{HK}, in the usual convention allowing homotopy-sphere summands.

\section{Introducing cyclic groups and controlling parity}
We introduce the group by monodromy, using a torus bundle with a
square-zero symplectic section. 

\begin{lemma}\label{lem:transfer}
Let $L$ be a simply connected irreducible minimal symplectic four-manifold containing
an essential Lagrangian torus $T$ with simply connected exterior
$C=L\setminus\operatorname{int}\nu T$. For every $m\ge2$ there is a
minimal symplectic manifold $Z_m$ with $\pi_1(Z_m)\cong\Z/m$ and
with the same Euler characteristic and signature as $L$. The construction
retains $C$ as a submanifold.
\end{lemma}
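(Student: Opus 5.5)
The plan is to realize $\Z/m$ by a symplectic sum of $L$ with a torus bundle over a torus along $T$, after first making $T$ symplectic. Since $T$ is Lagrangian and $[T]\neq0$ in $H_2(L;\Z)$, Gompf's perturbation lemma applies. It changes the symplectic form by a small closed $2$-form supported near $T$ and nonzero on $[T]$. This makes $T$ a symplectic torus of square zero and leaves the exterior $C$ unchanged as a smooth manifold. Because the surgery is local to $\nu T$, $C$ is retained as a submanifold, as the statement requires.

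For the other side, choose $A\in\mathrm{SL}(2,\Z)$ with $\operatorname{coker}(A-I)\cong\Z/m$; this is the matrix of \eqref{eq:monodromy}. Any $A$ with trace $2\pm m$ has $\det(A-I)=\pm m$, and one still has to check that the cokernel is cyclic rather than a product of smaller cyclic groups; a matrix of the form $\begin{pmatrix}0&-1\\1&t\end{pmatrix}$ has this property. Let $W=M_A\times S^1$, where $M_A$ is the mapping torus of $A$ acting linearly on $T^2_{uv}$. Then $W$ is a $T^2$-bundle over $T^2_{xy}$ with monodromy $A$ along $x$ and the identity along $y$. It carries a Thurston symplectic form, since the fiber is homologically essential. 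The origin of $T^2$ is fixed by $A$, so it gives a section $S\cong T^2_{xy}$ of square zero, and the form can be chosen so that $S$ is symplectic. The fundamental group is
\[
\pi_1(W)=\langle u,v,x,y\mid [u,v]=1,\ xux^{-1}=A(u),\ xvx^{-1}=A(v),\ y\ \text{central}\rangle .
\]
The complement $W\setminus\nu S$ has the same presentation, except that $[u,v]$ is no longer trivial; it becomes the meridian $\mu_S$, up to orientation.

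Form $Z_m=L\#_{T=S}W$, gluing by a diffeomorphism of the normal circle bundles and rescaling the forms so that the two areas agree. Van Kampen gives $\pi_1(Z_m)$ as the amalgam of $\pi_1(C)=1$ and $\pi_1(W\setminus\nu S)$ along $\pi_1(T^3)$. The images of the loops of $T$ and of $\mu_T$ are trivial on the $C$ side, so the relations kill $x$, $y$ and $\mu_S=[u,v]$. What remains is $\Z^2_{uv}/\im(A-I)\cong\Z/m$.

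For the invariants, $e(W)=e(T^2)=0$, so $e(Z_m)=e(L)+0-2\cdot0=e(L)$. Also $\sigma(W)=0$, and Novikov additivity gives $\sigma(Z_m)=\sigma(L)$. For minimality I will invoke Usher's theorem as in Theorem~\ref{thm:bridge}. $L$ is minimal by hypothesis. $W$ is aspherical, so $\pi_2(W)=0$ and it contains no exceptional sphere. Neither summand is an $S^2$-bundle over $T^2$ with the summing torus as a section, since $\pi_1(C)=1$ and $\pi_1(W)$ is not $\Z^2$ with $S^2$ fibers.

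The step I expect to be the main obstacle is the bookkeeping in the van Kampen argument, in particular identifying $\mu_S$ with the commutator $[u,v]$ in $W\setminus\nu S$. I would get this from the punctured fiber $T^2\setminus D^2$, which meets $S$ once and whose boundary is $\mu_S$. It also has to be confirmed that the based lifts of $x,y$ to $\partial\nu S$ are genuinely killed by the simply connected $C$, whatever gluing diffeomorphism is chosen. Keeping $T$ symplectic after the perturbation while still making the sum compatible is the secondary technical point.
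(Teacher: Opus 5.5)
Your proposal is correct and is essentially the paper's proof. It uses the same torus bundle (mapping torus $\times S^1$) with $\operatorname{coker}(A-I)\cong\Z/m$, the same section torus, Gompf's perturbation, and the same van Kampen and Usher arguments.

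Your use of the linear monodromy instead of an area-preserving representative that is the identity near a disk is a harmless variation. The linear map fixes the origin and preserves $du\wedge dv$. You should add one line saying that the flat normal bundle of the section has a nowhere-zero section, hence Euler number zero.

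Two small corrections. First, a closed $2$-form compactly supported in $\nu T$ integrates over $T$ to a multiple of $T\cdot T=0$, so Gompf's perturbation cannot be supported near $T$. This is harmless, since only the smooth exterior $C$ has to be retained. Second, minimality of $L$ for the perturbed form is not literally the hypothesis. The paper deduces it from irreducibility, and that is exactly where this otherwise unused hypothesis enters.
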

\begin{proof}
Consider the matrix
\begin{equation}\label{eq:monodromy}
 A_m=\begin{pmatrix}m+2&-1\\1&0\end{pmatrix}\in\mathrm{SL}(2,\Z).
\end{equation}
Choose an orientation-preserving diffeomorphism $\phi_m$ of the torus $T^2$,
inducing $A_m$, which is the identity on a neighborhood of a small closed
disk $D_0\subset T^2$.  Such a
representative exists because the mapping class group of the torus
with a disk fixed maps onto $\mathrm{SL}(2,\Z)$. We may choose it to be 
area preserving: start with a fixed area form $\omega$; the difference
$\phi_m^*\omega-\omega$ vanishes near $D_0$ and has integral zero.
Relative Moser isotopy on the complement of $D_0$ corrects the map
without changing its mapping class or its restriction near $D_0$.

Let
\[
 E_m=\bigl(T^2\times[0,1]/(x,1)\sim(\phi_m(x),0)\bigr)\times S^1.
\]
Write $t$ for the mapping-torus coordinate and $s$ for the last circle coordinate.
Projection to $(t,s)$ makes $E_m$ a torus bundle
$\rho_m:E_m\to S^1_t\times S^1_s$. Since $\phi_m$ preserves
$\omega$, the form
$\omega+dt\wedge ds$ descends to a symplectic form on $E_m$.
For $x_0\in\operatorname{int}D_0$, the torus
$B=\{x_0\}\times S^1_t\times S^1_s$ is the image of a section
of $\rho_m$. Its tangent plane is the base plane, on which
$\omega+dt\wedge ds$ restricts to $dt\wedge ds$; thus $B$ is
symplectic. It has trivial normal bundle. Its product neighborhood comes from the disk
on which $\phi_m$ is the identity.

Let $u,v$ generate the fundamental group of the punctured fiber and
let $\psi_m$ be the automorphism induced by $\phi_m$, using a basepoint
on the boundary of the fixed disk. Since the map is the identity there,
$\psi_m$ fixes the boundary commutator $[u,v]$. With $t$ positively
oriented in the mapping-torus coordinate, the section
exterior has presentation
\[
 \pi_1(E_m\setminus\operatorname{int}\nu B)
 =\left\langle u,v,t,s\ \middle|\
 t^{-1}ut=\psi_m(u),\quad t^{-1}vt=\psi_m(v),\quad
 [s,u]=[s,v]=[s,t]=1\right\rangle.
\]
The section-boundary longitudes are $t,s$, and its meridian is
$[u,v]$, up to orientation. Perturb the form on $L$ to make the
essential Lagrangian torus $T$ symplectic, and match its area with
that of $B$ by rescaling the form on $E_m$ \cite[Lemma 1.6]{Gompf}. Form
\[
 Z_m=L\#_{T=B}E_m.
\]
The simply connected $L$ exterior kills the entire boundary image.
Consequently van Kampen imposes $t=s=1$ and $[u,v]=1$. After
abelianization of the fundamental group of the punctured fiber, the remaining monodromy relations are
precisely $(A_m-I)\Z^2$. Thus
\[
 \pi_1(Z_m)\cong\Z^2/(A_m-I)\Z^2\cong\Z/m.
\]
Indeed,
\[
 A_m-I=\begin{pmatrix}m+1&-1\\1&-1\end{pmatrix},
 \qquad \det(A_m-I)=-m.
\]
The cyclic quotient can also be read directly from the columns of
$A_m-I$: in additive notation the generators $\bar u,\bar v$
satisfy
\[
 (m+1)\bar u+\bar v=0,\qquad -\bar u-\bar v=0.
\]
The second relation eliminates $\bar v=-\bar u$, and the first then
becomes $m\bar u=0$. Thus the quotient is cyclic of order $m$.

The torus bundle $E_m$ is aspherical, by the homotopy exact sequence
of its torus fibration over $T^2$. It therefore has no exceptional
spheres. It has $e(E_m)=0$ and $\sigma(E_m)=0$: Euler characteristic
is multiplicative for the bundle, and the product with $S^1$ admits
an orientation-reversing diffeomorphism. Neither summand is a ruled
surface with the summing torus as a section: $L$ is simply connected,
whereas a ruled surface with a torus section has fundamental group
$\Z^2$; $E_m$ is aspherical. The perturbed form on $L$ is still minimal because smooth
irreducibility excludes exceptional spheres. Usher's theorem therefore
gives minimality of the sum. A torus sum adds neither Euler characteristic nor signature in
this case. Finally, the entire exterior $C$ is retained, and no
modification affects its interior.
\end{proof}

Non-spinness of $L$ alone would not determine the parity of an even-degree
cover of $Z_m$. The following lemma provides a class in the retained exterior.

\begin{lemma}\label{lem:parity}
Let $L$ be a closed simply connected smooth oriented four-manifold, and let
$T\subset L$ be a square-zero torus with simply connected exterior $C$.
If $\sigma(L)\not\equiv0\pmod8$, then $C$ contains a closed embedded
oriented surface $S$ of odd self-intersection.
\end{lemma}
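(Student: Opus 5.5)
The plan is to convert the signature hypothesis into an odd intersection form, split off the part of $H_2(L;\Z)$ that sees $T$, and realize an odd class from the remaining part by a surface inside $C$.

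\emph{Step 1: the form of $L$ is odd.} Since $L$ is closed and simply connected, $H_2(L;\Z)$ is free and the intersection form $Q_L$ on it is unimodular. An even unimodular form has signature divisible by $8$ (van der Blij). Since $\sigma(L)\not\equiv0\pmod 8$, the form $Q_L$ is odd. Odd classes alone are not enough, however: I need one that is carried by $C$.

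\emph{Step 2: identify the image of $H_2(C)$.} I use the exact sequence of the pair $(L,C)$, exactly as in the proof of Theorem~\ref{thm:bridge}:
\[
 H_2(C;\Z)\longrightarrow H_2(L;\Z)\longrightarrow H_2(L,C;\Z)\longrightarrow H_1(C;\Z)=0 .
\]
By excision and K\"unneth, $H_2(L,C)\cong H_2(T\times D^2,T\times S^1)\cong H_0(T)\otimes H_2(D^2,S^1)\cong\Z$. Under this identification the middle map is $x\mapsto x\cdot[T]$. Hence:
\begin{itemize}
\item the image of $H_2(C)$ is exactly $[T]^{\perp}$;
\item the map $x\mapsto x\cdot[T]$ is surjective, so there is a class $y$ with $y\cdot[T]=1$.
\end{itemize}

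\emph{Step 3: split off a hyperbolic-type summand.} Let $P=\operatorname{span}\{[T],y\}$. Its Gram matrix is $\begin{pmatrix}0&1\\1&y^2\end{pmatrix}$, which has determinant $-1$. So $P$ is unimodular and $H_2(L)=P\oplus P^{\perp}$, with $W:=P^{\perp}$ unimodular. The Gram matrix has negative determinant, so $P$ is indefinite of rank two and $\sigma(P)=0$. Therefore $\sigma(W)=\sigma(L)\not\equiv0\pmod8$. By van der Blij again, $W$ is odd, so there is $w\in W$ with $w\cdot w$ odd. Since $W\subset[T]^{\perp}$, Step~2 gives a lift of $w$ to a class $\tilde w\in H_2(C;\Z)$.

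\emph{Step 4: realize $\tilde w$ by a surface in $C$.} Every class in $H_2$ of a compact oriented $4$-manifold is represented by a closed embedded oriented surface. One way to see this is to take a map to $\CP^\infty$ realizing the Lefschetz dual class in $H^2(C,\partial C)$, make it transverse to $\CP^{\infty-1}$ relative to the boundary, and push the result into the interior. This gives $S\subset\operatorname{int}C$ with $[S]=\tilde w$. Self-intersection is computed in a tubular neighborhood of $S$, which lies in $C\subset L$. Hence $S\cdot S=w\cdot w$, which is odd.

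The main point needing care is Step~2: identifying $H_2(L,C)$ with $\Z$ through intersection with $[T]$, so that the image of $H_2(C)$ is exactly $[T]^{\perp}$. This is where the hypothesis $H_1(C)=0$ enters; it yields the dual class $y$ that makes the splitting $P\oplus W$ unimodular.
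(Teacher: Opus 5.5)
Your proof is correct, and it reaches the algebraic conclusion by a different route from the paper's. The paper argues by contradiction. Assuming every class in $K=\ker(x\mapsto x\cdot[T])$ has even square, it notes that $x\mapsto x^2 \bmod 2$ is linear on $\Lambda/2\Lambda$ and vanishes on the mod-two image of $K$, which is $\ker\bar f$. Hence either $\Lambda$ is even or $[T]$ is characteristic, and van der Blij's lemma in its characteristic-vector form $c^2\equiv\sigma\pmod 8$ rules out both cases, because $[T]^2=0$. You instead use the dual class $y$ to split off the unimodular, signature-zero plane $P=\langle [T],y\rangle$. This gives $[T]^\perp=\Z[T]\oplus W$ with $W=P^\perp$ unimodular and $\sigma(W)=\sigma(L)$. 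You then need only the weaker fact that an even unimodular form has signature divisible by $8$, and you obtain the odd class directly rather than by contradiction. Your version also shows slightly more: the whole odd unimodular lattice $W$ is carried by $C$, and it is clear why the radical $\Z[T]$ of $K$ plays no role. The paper's version avoids the orthogonal splitting and is a few lines shorter. Your Step~1 is never used afterwards. Your transversality argument for realizing the class by an embedded surface in $\operatorname{int}C$ is a standard alternative to the paper's immersed-surface-and-resolve argument.
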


\begin{proof}
Write $\Lambda=H_2(L;\Z)$ with its unimodular intersection pairing,
$t=[T]$, and $f(x)=x\cdot t$. The exact sequence
\[
 H_2(C;\Z)\longrightarrow\Lambda\xrightarrow{f}\Z
 \longrightarrow H_1(C;\Z)=0
\]
shows that $f$ is surjective and $t$ is primitive. Let $K=\ker f$.
Suppose, for contradiction, that every vector in $K$ has even square. The function
\[
 q:\Lambda/2\Lambda\longrightarrow\mathbb{Z}/2,\qquad q(\bar x)=x^2\pmod2
\]
is linear. The mod-two image of $K$ is exactly $\ker\bar f$: when
$f(y)$ is even, choose $u$ with $f(u)=1$ and replace $y$ by
$y-f(y)u$, which has the same mod-two class and lies in $K$.
Thus $q=\lambda\bar f$ for $\lambda\in\mathbb{Z}/2$.

If $\lambda=0$, the lattice is even, so its signature is divisible by
eight. If $\lambda=1$, then $t$ is characteristic, and van der Blij's lemma
in \cite{MH} gives $t^2\equiv\sigma(L)\pmod8$. Both cases contradict
$t^2=0$ and the signature hypothesis. Therefore $K$ contains an element 
of odd square. Exactness lifts it to $H_2(C;\Z)$. Represent this class
by an immersed closed oriented surface in the interior, resolve its
double points, and join components by tubes if necessary. This gives
an embedded surface $S\subset\operatorname{int}C$ representing the
same class, with odd square.
\end{proof}

\begin{proof}[Proof of Theorem \ref{thm:main}]
Construct $L$ as above, and apply Lemma \ref{lem:transfer}. Equation
\eqref{eq:L} gives the invariants of $Z_m$. Lemma \ref{lem:parity}
applies because $\sigma(L)=1$. The odd-square surface $S$ is retained
in $Z_m$. Its fundamental group has trivial image in $\pi_1(Z_m)$,
since its inclusion factors through the simply connected exterior $C$.
Let $p:\widetilde Z_m\to Z_m$ be the universal covering map.
Its degree is $m$, since $\pi_1(Z_m)\cong\Z/m$. The image of
$\pi_1(S)$ is trivial, so its monodromy action on the $m$ points
of a covering fiber is trivial. Consequently the restricted covering
$p^{-1}(S)\to S$ is the disjoint union of $m$ degree-one coverings:
\[
 p^{-1}(S)=S_1\sqcup\cdots\sqcup S_m,\qquad
 p|_{S_j}:S_j\xrightarrow{\cong}S.
\]
The normal bundle of each $S_j$ is the degree-one pullback of the
normal bundle of $S$. Their normal Euler numbers therefore agree,
so $[S_j]^2=[S]^2$ is odd. An even intersection form would make
every integral homology class have even square; the class $[S_j]$
excludes this. Thus the intersection form of $\widetilde Z_m$ is
odd for every $m$, including even $m$.

The symplectic form pulls back to $\widetilde Z_m$, and
\[
 e(\widetilde Z_m)=43m,\qquad \sigma(\widetilde Z_m)=m.
\]
Since the cover is simply connected,
\[
 b_2(\widetilde Z_m)=43m-2,\qquad
 b_2^+(\widetilde Z_m)=22m-1,\qquad
 b_2^-(\widetilde Z_m)=21m-1.
\]
Its intersection form is odd, indefinite and unimodular, hence
\[
 (22m-1)\langle1\rangle\oplus(21m-1)\langle-1\rangle.
\]
Freedman's theorem in \cite{Freedman}, together with the vanishing
Kirby--Siebenmann invariant of the smooth cover, gives the stated
homeomorphism. Taubes's theorem in \cite{Taubes} gives a nonzero
Seiberg--Witten invariant because $b_2^+(\widetilde Z_m)>1$.
The standard connected sum has vanishing Seiberg--Witten invariants:
it splits into two summands with positive $b_2^+$. The cover is
therefore not diffeomorphic to the standard model. The deck group
preserves the pulled-back symplectic form and acts freely.
\end{proof}

\section{Geography and integral homology}

The construction separates the choice of group order from the
characteristic numbers of the quotient. For every $m\ge2$,
\[
 b_1(Z_m)=0,\qquad b_2^+(Z_m)=21,\qquad b_2^-(Z_m)=20,
\]
where the Betti numbers are taken over $\mathbb Q$. Integral duality gives
\[
 H_1(Z_m;\Z)\cong\Z/m,\qquad
 H_2(Z_m;\Z)\cong\Z^{41}\oplus\Z/m,\qquad H_3(Z_m;\Z)=0.
\]
Indeed, abelianization determines $H_1$; the free rank of $H_2$ follows
from $e=43$; and Poincar\'e duality with the universal coefficient theorem
identifies its torsion with $\operatorname{Ext}(H_1,\Z)$.
For a summary of the topological invariants, we have the following table.  

\bigskip 

\begin{center}
\begin{tabular}{@{}lrrrr@{}}
\toprule
Manifold & $e$ & $\sigma$ & $\chi_h$ & $c_1^2$\\
\midrule
Cartwright--Steger seed $Y$ & $38$ & $2$ & $10$ & $82$\\
Genus-two sum $L$ & $43$ & $1$ & $11$ & $89$\\
Cyclic quotient $Z_m$ & $43$ & $1$ & $11$ & $89$\\
Universal cover $\widetilde Z_m$ & $43m$ & $m$ & $11m$ & $89m$\\
\bottomrule
\end{tabular}
\end{center}

\bigskip

For $m=2$, the universal cover is an exotic
$43\CP^2\#41\overline{\CP}{}^{2}$; for $m=3$, it is an exotic
$65\CP^2\#62\overline{\CP}{}^{2}$. 
These examples have free
symplectic cyclic actions with quotients of positive signature one.
For comparison, exotic $k\CP^2\#(k-2)\overline{\CP}{}^{2}$ with signature $2$ were constructed for all odd $k\geq 19$ in \cite{ASY}.   
Exotic $l\CP^2\#(l-3)\overline{\CP}{}^{2}$ with signature $3$ were constructed for all odd 
$l\geq 29$ in \cite{AS}.  
We refer to \cite{AHP}, \cite{AP-2008} and \cite{AP-2021} for the existence of exotic smooth structures for other signatures.  
It is not known to the authors whether the exotic smooth structures that we constructed in this paper are in fact diffeomorphic to the exotic smooth structures in \cite{ASY}, \cite{AS} and \cite{AHP}.   
Finally, our construction differs from the intermediate-double-cover sum in
\cite{AY}: the extra genus-two sum changes the quotient invariants and
supplies the torus needed to prescribe the cyclic order.

\section*{Acknowledgments}

Magma was used to assist with fundamental group computations, and an LLM-based tool was used solely for grammar checking. All mathematical ideas are due to the authors, who reviewed the material and take full responsibility for the content of the manuscript.


\begin{thebibliography}{99}
	
\bibitem{AHP}  
A.~Akhmedov, M.~C.~Hughes and B.~D.~Park,
\textit{Geography of simply connected nonspin symplectic\/ $4$-manifolds with positive signature}, 
Pacific J.~Math. \textbf{261} (2013), no.~2, 257--282.	

\bibitem{AP-2008}  
A.~Akhmedov and B.~D.~Park, 
\textit{New symplectic\/ $4$-manifolds with nonnegative signature},
J. G\"{o}kova Geom. Topol. GGT {\bf 2} (2008), 1--13.

\bibitem{AP-2021}
A.~Akhmedov and B.~D.~Park, 
\textit{Geography of simply connected nonspin symplectic\/ $4$-manifolds with positive signature, II}, 
Canad. Math. Bull. \textbf{64} (2021), no.~2, 418--428.

\bibitem{AS}
A.~Akhmedov, S.~Sakall{\i}, 
\textit{On the geography of simply connected nonspin symplectic\/ $4$-manifolds with nonnegative signature}, 
Topology Appl. \textbf{206} (2016), 24--45. 

\bibitem{ASY}
A.~Akhmedov, S.~Sakall{\i}, and S.-K.~Yeung,
\textit{Complex ball quotients and new symplectic\/ $4$-manifolds with
nonnegative signatures}, 
Taiwanese J.~Math. \textbf{28} (2024), no.~1,
29--53. 

\bibitem{AY}
A.~Akhmedov and S.-K.~Yeung,
\textit{Torus surgery constraints and intermediate covers of the
Cartwright-Steger surface}, 
arXiv:2610.06442. 
Subgroup computation and supporting files at:
\url{https://github.com/akhmedov-coder/cartwright-steger-torus-surgery}.

\bibitem{BSS}
R.~\.I.~Baykur and A.~I.~Stipsicz and Z.~Szab\'o,
\textit{Smooth structures on four-manifolds with finite cyclic fundamental groups},
arXiv:2406.09007.

\bibitem{CKY}
D.~I.~Cartwright, V.~Koziarz, and S.-K.~Yeung,
\textit{On the Cartwright--Steger surface}, J.~Algebraic Geom.
\textbf{26} (2017), no.~4, 655--689.

\bibitem{Freedman}
M.~H.~Freedman, 
\textit{The topology of four-dimensional manifolds},
J.~Differential Geom. \textbf{17} (1982), no.~3, 357--453.

\bibitem{Gompf}
R.~E.~Gompf, 
\textit{A new construction of symplectic manifolds},
Ann.~of Math. (2) \textbf{142} (1995), no.~3, 527--595.

\bibitem{HK}
M.~J.~D.~Hamilton and D.~Kotschick,
\textit{Minimality and irreducibility of symplectic four-manifolds},
Int.~Math.~Res.~Not. (2006), Art.~ID 35032.

\bibitem{LT}
R.~Ladu and S.~Tagliente, 
\textit{On smooth structures over\/ $4$-manifolds with fundamental group of even order}, 
arXiv:2603.29794.

\bibitem{MH}
J.~Milnor and D.~Husemoller, 
\textit{Symmetric bilinear forms},
Ergebnisse der Mathematik und ihrer Grenzgebiete, vol.~73,
Springer-Verlag, Berlin, 1973.

\bibitem{Taubes}
C.~H.~Taubes, 
\textit{The Seiberg--Witten invariants and symplectic forms},
Math.~Res.~Lett. \textbf{1} (1994), no.~6, 809--822.

\bibitem{Torres}
R. Torres, 
\textit{Geography and botany of irreducible non-spin symplectic $4$-manifolds with abelian fundamental group}, 
Glasg.~Math.~J. \textbf{56} (2014), no.~2, 261--281.


\bibitem{Usher}
M.~Usher, 
\textit{Minimality and symplectic sums}, Int.~Math.~Res.~Not.
(2006), Art.~ID 49857.

\end{thebibliography}
\end{document}